\def\H{{\cal H}}
\def\R{\mathbb{R}}
\def\C{\mathbb{C}}
\def\H2{H^2(\R^N)}
\def\L2{L^2(\R^N)}
\def\H{{\cal H}}
\def\H1{H^1(\R)}
 \newcommand{\de}{\delta}
\newcommand{\supp}{\operatorname{supp}}
 \newcommand{\Del}[1]{}
\numberwithin{equation}{section}
\newtheorem{thm}{Theorem}[section]
\newtheorem{lem}[thm]{Lemma}
\theoremstyle{remark}
\newtheorem*{exam*}{Examples}
\begin{document}

\setcounter{page}{1}

\title[BLOW-UP]{Finite time/Infinite time blow-up behaviors for the inhomogeneous nonlinear Schr\"{o}dinger equation}

\author{Ruobing Bai}
\address{Center for Applied Mathematics\\
Tianjin University\\
Tianjin 300072, China}
\email{baimaths@hotmail.com}
\thanks{}

\author{Bing Li}
\address{School of Mathematical Sciences\\Tiangong University\\
Tianjin 300387, China}
\email{binglimath@gmail.com}
\thanks{}


\subjclass[2010]{Primary  35Q55; Secondary 35B44}


\keywords{inhomogeneous nonlinear Schr\"{o}dinger equation, blow-up,  localized virial identity}

\maketitle

\begin{abstract}\noindent
In this work, we consider the following focusing inhomogeneous nonlinear Schr\"odinger equation \begin{align*}
 i\partial_t u+\Delta u +|x|^{-b}|u|^p u=0,\quad (t, x)\in\R\times\R^N
\end{align*}
with $0<b<\mbox{min}\{2, N\}$ and $\frac{4-2b}{N}<p<\frac{4-2b}{N-2}$. Assume that $u_0 \in H^{1}(\R^N)$ and beyond the ground state threshold,
then we prove the following two statements,

(1) when $\frac{4-2b}{N}<p< \min\{\frac{4}{N}, \frac{4-2b}{N-2}\}$, or $p =\frac{4}{N}$ when $b \in (0, \frac 4 N)$,
 then the corresponding solution blows up in finite time;

(2) when $\frac{4}{N}<p<\frac{4-2b}{N-2}$,  we prove the finite or infinite time blow-up. Moreover, we can further obtain a precise lower bound of infinite time  blow-up rate, that is
\begin{equation*}
  \sup_{t\in[0,T]}\|\nabla u(t)\|_{L^2}\gtrsim T^{\kappa},\quad \mbox{for some} \quad \kappa>0.
\end{equation*}

To our knowledge, the statement (1) establishes the first finite time blow-up result for this equation in the intercritical case when the initial data $u_0$ doesn't have finite variance and is non-radial. The statement (2) gives the first result for the infinite time blow-up rate for this equation.
\end{abstract}

\section{Introduction}
\vskip 0.2cm
In this paper, we consider the following focusing inhomogeneous nonlinear Schr\"odinger (INLS) equation
\begin{equation}\label{IDNLS}
   \left\{ \aligned
    &i\partial_t u+\Delta u +|x|^{-b}|u|^p u=0,
    \\
    &u(x,0)=u_0,
   \endaligned
  \right.
 \end{equation}
where $u=u(t,x): \R\times\R^N\rightarrow \C$ is a complex-valued function, $0<b<\min\{2, N\}$ and $\frac{4-2b}{N}<p<2^{\ast}_b$, where $2^{\ast}_b=\infty$ if $N=1,2$, and $2^{\ast}_b=\frac{4-2b}{N-2}$ if $N\geq3$.
The equation has various physical contexts such as stable high power propagation can be achieved
in plasma by sending a preliminary laser beam that creates a channel with a reduced electron density, and thus reduces the nonlinearity inside the channel (see \cite{Gill-00, Liu-Tripathi-94}).
In particular, if $b=0$, equation \eqref{IDNLS} is the following classical nonlinear Schr\"odinger (NLS) equation
\begin{equation}\label{1.2222222}
   i\partial_t u+\Delta u +|u|^p u=0.
 \end{equation}

It is well known that if $u(t,x)$ is a solution of \eqref{IDNLS}, so is
\[
u_{\lambda}(t,x)=\lambda^{\frac{2-b}{p}}u(\lambda^2t,\lambda x), \quad \lambda>0.
\]
Denote $s_c=\frac{N}{2}-\frac{2-b}{p}$, then the scaling leaves $\dot{H}^{s_c}$ norm invariant, that is,
\[
\|u\|_{\dot{H}^{s_c}}=
\|u_{\lambda}\|_{\dot{H}^{s_c}}.
\]
Then, $s_c$ is called the $critical$ $regularity$. If $s_c = 0$ ($p=\frac{4-2b}{N}$), the problem is known as mass-critical or $L^2$-critical;
if $s_c < 0$ ($0<p<\frac{4-2b}{N}$), it is called mass-subcritical or $L^2$-subcritical; if $0 < s_c < 1$, it is known as mass-supercritical and energy-subcritical (or intercritical). In addition, the solution satisfies
the conservation of mass and energy, defined respectively by
\begin{align*}
&M\big(u(t)\big):=\int_{\R^N}|u|^2dx=M(u_0),\\
&E\big(u(t)\big):=\frac12\int_{\R^N}|\nabla u|^2dx-\frac{1}{p+2}\int_{\R^N}|x|^{-b}|u|^{p+2}dx=E(u_0).
\end{align*}

The well-posedness theory of equation \eqref{IDNLS} was first studied by  Genoud-Stuart \cite{Genoud-Stuart-08}. Using the abstract theory of Cazenave \cite{Cazenave-03}, they showed that the equation \eqref{IDNLS} is local well-posedness in $H^1(\R^N)$ if $0<p<2^{\ast}_b$ ($s_c<1$), as well as global well-posedness for any initial data if $0<p<\frac{4-2b}{N}$ ($s_c<0$) and sufficiently small initial data if $\frac{4-2b}{N}\leq p<2^{\ast}_b$ ($0\leq s_c<1$) in $H^1(\R^N)$. Farah\cite{Farah-16} proved a
Gagliardo-Nirenberg type estimate and used it to establish sufficient conditions for global well-posedness  in $H^{1}(\R^N)$.
Guzm\'{a}n \cite{Guzman-17} used the contraction mapping principle based on the known classical Strichartz estimates to establish the local and global well-posedness for
the INLS equation \eqref{IDNLS} in Sobolev spaces $H^s(\R^N), 0\leq s \leq1$. As for the scattering theory of equation \eqref{IDNLS}, Farah and Guzm\'{a}n  \cite{Farah-Guzman-171} proved that when the initial data is radial, $0<b<\frac12$, and $p=2$, then the corresponding solution  scatters in $H^1(\R^3)$. Later, Farah and Guzm\'{a}n \cite{Farah-Guzman-172} extended the range of $b$ and $p$, and obtained the energy scattering to all spacial dimensions $N\geq2$.
Some other results related to the well-posedness and scattering can be found in \cite{Dinh-19, Dinh-20, Dinh-21,Xu-Zhao-19, Cardoso-Farah-Guzman-Murphy-20, Campos-21, Dinh-Keraani-21} and the references therein.



Furthermore, when $p=\frac{4-2b}{N}$, $u_0 \in H^1(\R^N)$ and $\|u_0\|_{L^2} < \|Q\|_{L^2}$, Genoud \cite{Genoud-12} showed that equation \eqref{IDNLS}
is globally well-posed in $H^1(\R^N)$, and proved the existence of critical mass blow-up solutions. Here $Q$ is the ground state of the elliptic equation
\begin{align}\label{3.241}
-Q+\triangle Q+|x|^{-b}|Q|^{p}Q=0.
\end{align}
We give a brief introduction of the existence and uniqueness of ground state solutions.  The existence of the ground state was obtained by Genoud-Stuart \cite{Genoud-08, Genoud-Stuart-08} in dimensions $N\geq2$, and
by Genoud \cite{Genoud-10} in dimension $N = 1$. Uniqueness was obtained in dimensions $N \geq 3$ by Yanagida \cite{Yanagida-91} (see
also Genoud \cite{Genoud-08}), in dimensions $N = 2$ by Genoud \cite{Genoud-11} and in dimension $N = 1$ by Toland \cite{Toland-84}.

In recent years, the blow-up theory of equation \eqref{IDNLS} has also been extensively studied. Farah \cite{Farah-16} proved that if the initial data  $u_0\in \{f\in H^{1}(\R^N); |x|f\in L^2(\R^N )\}$ and satisfies
\begin{align}
  E(u_0)^{s_c}M(u_0)^{1-s_c}<E(Q)^{s_c}M(Q)^{1-s_c},\label{1.4}
\end{align}
and
\begin{align}
  \|\nabla u_0\|_{L^2}^{s_c}\|u_0\|_{L^2}^{1-s_c}>\|\nabla Q\|_{L^2}^{s_c}\|Q\|_{L^2}^{1-s_c}\label{1.5}
\end{align}
with $0<s_c<1$, then the corresponding solution  blows up in finite time. In the intercritical case, Dinh \cite{Dinh-17} showed the finite time blow-up for radial initial
data $u_0\in H^1(\R^N)$ with negative energy or below ground state in dimensions $N \geq 2$. Campos and Cardoso \cite{Campos-Cardoso-20} established a scattering and blow-up dichotomy above the mass-energy threshold, which extended  the condition \eqref{1.4}. Later, Cardoso and Farah \cite{Cardoso-Farah-20} proved the finite time blow-up if the initial data $u_0$ is radial and $E(u_0)\leq 0$ in dimensions $N\geq 3$ with $0< s_c<1$.
 Ardila and Cardoso \cite {Ardila-Cardoso-21} proved that if $u_0\in H^{1}(\R^N)$ satisfies \eqref{1.4} and \eqref{1.5} with $0<s_c<1$, then the corresponding solution blows up in finite or infinite time.
Recently, Dinh and Keraani \cite{Dinh-Keraani-21} obtained a new blow-up criterion  to equation \eqref{IDNLS} and when $u(t)$  has finite variance then it blows up in finite time.
See also for examples some other  blow-up results in \cite{Merle-96, Chen-Guo-07, Raphael-Szeftel-11, zhu-14} and references therein for $b<0$ and more general nonlinear terms.

Criteria for the existence of finite-time blow-up solutions for the INLS equation \eqref{IDNLS} are known only in cases where $|x|u_0\in L^2(\R^N )$ or $u_0$ is radial. In this work, we aim to study the finite time blow-up theory for the INLS equation \eqref{IDNLS} for the non-radial data without finite variance. We prove that the solution for INLS equation \eqref{IDNLS} blows up in finite time when $\frac{4-2b}{N}<p< \min\{\frac{4}{N}, 2^{\ast}_b\}$ or $p =\frac{4}{N}$. The main blow-up mechanism is that for an ODE equation $f^l<f'$, if $l>1$, then $f$ will blow up in finite time. Besides, we give a lower bound for infinite time blow-up rate when $\frac{4}{N}<p<2^{\ast}_b$. More precisely, we prove the followings.

\begin{thm}\label{thm:main1}
Let $s_c=\frac{N}{2}-\frac{2-b}{p}$ and $0<b<\min\{2, N\}$. Assume that $u_0\in H^1(\R^N)$ and satisfies \eqref{1.4} and \eqref{1.5}. Let $u(t)$ be the solution of \eqref{IDNLS} defined in the maximal time interval of existence, say $I$. If one of the following cases holds,

 (1) $\frac{4-2b}{N}<p< \min\{\frac{4}{N}, 2^{\ast}_b\}$, or

(2) $p =\frac{4}{N}$, $0< b< \frac 4 N$,\\
then $I$ is finite.
\end{thm}

\begin{thm}\label{thm:main2}
Let $s_c=\frac{N}{2}-\frac{2-b}{p}$ and $0<b<\frac 4 N$. Assume that $u_0\in H^1(\R^N)$ and satisfies \eqref{1.4} and \eqref{1.5}. Let $u(t)$ be the solution of \eqref{IDNLS} defined in the maximal time interval of existence, say $I$.
If $\frac{4}{N}<p<2^{\ast}_b$,  then either $I$ is finite, or $I=\R$, and for any $T>0$,
\begin{align}\label{1.29348}
  \sup_{t\in[0,T]}\|\nabla u(t)\|_{L^2}\gtrsim T^{\frac{b}{2\alpha-2}},    \quad \quad with \quad \alpha=\frac{Np}{4}.
\end{align}

\end{thm}


To our best knowledge, this is the first finite time blow-up result for the equation \eqref{IDNLS} in the intercritical case when $u_0$ satisfies the conditions \eqref{1.4} and \eqref{1.5}, without the condition that $|x|u_0\in L^2(\R^N )$ or $u_0$ is radial.

For the classical nonlinear Schr\"odinger equation \eqref{1.2222222}, in the case of general $H^1$ data (not necessarily finite variance or radially
symmetric), the first blow-up result was proved by Glangetas and Merle \cite{Glangetas-Merle-95}.
They proved that negative energy solutions either blows up in finite time or blows up in infinite time in the sense that
\begin{align}\label{1.442434}
\sup_{t\in\R}\|\nabla u(t)\|_{L^2}=+\infty.
\end{align}
In \cite{Holmer-Roudenko-10}, Holmer and Roudenko showed that under the conditions that \eqref{1.4} and \eqref{1.5}, then the
corresponding solution to the focusing 3D cubic NLS either blows up in finite
time or blows up in infinite time.
Later, Du, Wu, and Zhang \cite{Du-Wu-Zhang-16} gave an alternative simple proof for this result
and extended it to all dimensions.
So, it is important to note that for the classical NLS, it is still unknown that $H^1$ solutions having negative energy or satisfying \eqref{1.4} and \eqref{1.5}
blow up in finite time (except the 1D mass-critical NLS \cite{Ogawa-Tsutsumi-91-2}).
The results of blow-up in some other situations can be referred to \cite{Glassey-77, Ogawa-Tsutsumi-91-1, Martel-97, Merle-Raphael-05, Zakharov-72} and references therein.

Compared with this, for the inhomogeneous nonlinear Schr\"odinger equation \eqref{IDNLS}, when $0<s_c<\min \{\frac{bN}{4}, 1\}$ or $s_c=\frac{bN}{4}$, we prove the finite time blow-up, and when $\frac{bN}{4}<s_c<1$, prove the finite or infinite time blow-up and obtain a lower bound of blow-up rate \eqref{1.29348}, which is finer than \eqref{1.442434}. The key to our proof is the use of the localised virial identity. Thanks to the existence of $|x|^{-b}$, we can obtain the good remainder estimates by using the Gagliardo-Nirenberg inequality when $\frac{4-2b}{N}<p\leq\frac4N$.
When $\frac{4}{N}<p< 2^{\ast}_b$, the above argument fails, so we choose a time-dependent radius in the localised virial identity. In particular, we take
\begin{align*}
R=R(T)=C\sup_{t\in [0,T]}\|\nabla u(t)\|_{L^2}^{\frac{2\alpha-2}{b}}.
 \end{align*}
This allows us to use the radius to control the blow-up time through delicated analysis, and thus gives the lower bound for infinite time blow-up rate.

This paper is organized as follows. In Section \ref{sec:notations}, we give some basic notation and identity. In Section \ref{sec:proof}, we give the proofs of Theorem \ref{thm:main1} and Theorem \ref{thm:main2}.
\vskip 0.2cm

\section{Notation and Local Virial identity}\label{sec:notations}

\subsection{Notation}

We write $X\lesssim Y$ or $Y\gtrsim X$ to denote the estimate $X\leq CY$ for some constant (which may depend on $p$, $b$, $N$ and the mass $M(u_0)$). Throughout the whole paper, the letter $C$ will denote different positive constants which are not important in our analysis and may vary line by line.

Before stating this theorem, we introduce some quantities. Let the quantity
\begin{align*}
K(u)=\int_{\R^N}|\nabla u|^2dx-\frac{Np+2b}{2(p+2)}\int_{\R^N}|x|^{-b}|u|^{p+2}dx,
\end{align*}
then it is known as the virial identity that for the solution $u$ of the equation \eqref{IDNLS},
\begin{align*}
\frac{d^2}{dt^2}\int|x|^2|u(t,x)|^2 dx=8K(u(t)).
\end{align*}

\subsection{Local Virial identity}\label{1261}

Let us start by introducing the local Virial identity
\begin{align*}
I(t)=\int\phi(x)|u(t,x)|^2 dx,
\end{align*}
then by direct computations (see for example \cite{Ardila-Cardoso-21}), we have that
\begin{lem}
 For any $\phi\in C^4(\R^N)$,
\begin{align*}
I'(t)=2Im\int \nabla\phi\cdot\nabla u \overline{u}dx,
\end{align*}
and
\begin{align*}
I''(t)=&4Re\sum\limits_{j,k=1}^N\int \partial_j\partial_k\phi\cdot \partial_ju \partial_k \overline{u}dx-\int|u|^2\Delta^2\phi dx\nonumber\\
&-\frac{2p}{p+2}\int |x|^{-b}|u|^{p+2}\Delta\phi dx\nonumber\\
&+\frac{4}{p+2}\int \nabla (|x|^{-b})\cdot\nabla\phi|u|^{p+2}dx.
\end{align*}
\end{lem}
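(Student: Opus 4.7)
The plan is to verify both identities by differentiating under the integral, substituting $\partial_t u = i\Delta u + i|x|^{-b}|u|^{p-1}u$ from \eqref{IDNLS}, and then integrating by parts against $\phi$; the $C^4$ hypothesis on $\phi$ is exactly what keeps every spatial derivative produced in the computation locally integrable. For $I'(t)$ I would start from the mass-density identity $\partial_t|u|^2 = 2\re(\bar u\,\partial_t u) = -2\im(\bar u\,\Delta u) = -2\,\nabla\cdot\im(\bar u\,\nabla u)$, in which the nonlinearity drops out automatically because $|x|^{-b}|u|^{p+1}$ is real. A single integration by parts against $\phi$ then yields the claimed $I'(t) = 2\im\int \nabla\phi\cdot \nabla u\,\bar u\, dx$.

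For $I''(t)$ the plan is to differentiate $I'(t)$ once more in time, replace $\partial_t u$ by the right-hand side of \eqref{IDNLS}, and split the result into a linear (Laplacian) contribution and a nonlinear (potential) contribution. The linear contribution is the classical localized virial computation for the free Schr\"odinger equation: two integrations by parts that move the Laplacian off of $u$ and then symmetrize in the indices $j,k$ should produce the hessian term $4\re\sum_{j,k}\int \partial_j\partial_k\phi\,\partial_j u\,\partial_k\bar u\, dx$ together with the bilaplacian remainder $-\int |u|^2 \Delta^2\phi\, dx$.

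The nonlinear contribution is the only step where the inhomogeneity $|x|^{-b}$ really plays a role. After substituting the equation and cancelling the purely real terms I expect to obtain $2\re\int \partial_j\phi\bigl[\partial_j(|x|^{-b}|u|^{p-1}u)\,\bar u - |x|^{-b}|u|^{p-1}\,\partial_j u\,\bar u\bigr]\, dx$. The key algebraic identity here is $|u|^{p-1}\re(\bar u\,\partial_j u) = \frac{1}{p+1}\partial_j(|u|^{p+1})$, which converts both pieces into divergences of $|u|^{p+1}$ weighted by $|x|^{-b}\partial_j\phi$. One final integration by parts then distributes the outer derivative between the cutoff $\phi$ and the weight $|x|^{-b}$, and careful tracking of the constants produces exactly $-\frac{2(p-1)}{p+1}\int|x|^{-b}|u|^{p+1}\Delta\phi\, dx$ plus $\frac{4}{p+1}\int \nabla(|x|^{-b})\cdot\nabla\phi\, |u|^{p+1}\,dx$, as stated.

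The main obstacle I anticipate is the bookkeeping of real and imaginary parts in the nonlinear step, together with justifying the integrations by parts near the singularity $x=0$ of $|x|^{-b}$. Since $0<b<\min\{2,N\}$ and $u\in H^1(\R^N)$, however, the weight is locally integrable and Hardy/Sobolev embeddings control all the boundary terms that appear when one regularizes $u$ by smooth, spatially decaying approximants, so this is a technical rather than conceptual difficulty.
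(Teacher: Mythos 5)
Your proposal is correct: differentiating under the integral, substituting the equation, using $\re(\bar u\,\partial_j u)|u|^{p-1}=\frac{1}{p+1}\partial_j(|u|^{p+1})$, and one final integration by parts distributing the derivative over $|x|^{-b}\nabla\phi$ does reproduce exactly the coefficients $-\frac{2(p-1)}{p+1}$ and $\frac{4}{p+1}$ stated in the lemma. This is essentially the same argument the paper relies on (it omits the computation and cites Ardila--Cardoso for the standard localized virial calculation), so there is nothing substantive to add.
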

From this lemma, if $\phi$ is radial, then one may find that
\begin{align}\label{2.1}
I'(t)=2Im\int\phi'\frac{x\cdot \nabla u}{r} \bar{u}dx,
\end{align}
and
\begin{align}\label{2.7937331}
I''(t)=&4\int \frac{\phi'}{r}|\nabla u|^2dx+4\int(\frac{\phi''}{r^2}-\frac{\phi'}{r^3})|x\cdot\nabla u|^2dx\nonumber\\
&-\frac{2p}{p+2}\int\Big[\phi''+(N-1+\frac{2b}{p})\frac{\phi'}{r}\Big]|x|^{-b}|u|^{p+2}dx
-\int\Delta^2\phi|u|^2dx,
\end{align}
here and in the sequel, $r$ denotes $|x|$.

We rewrite $I''(t)$ in (\ref{2.7937331}) as
\begin{align}\label{2.6}
I''(t)= 8K(u(t))+\mathcal{R}_1+\mathcal{R}_2+\mathcal{R}_3,
\end{align}
where
\begin{align*}
  \mathcal{R}_1=&4\int (\frac{\phi'}{r}-2)|\nabla u|^2dx+4\int(\frac{\phi''}{r^2}-\frac{\phi'}{r^3})|x\cdot\nabla u|^2dx,\\
\mathcal{R}_2=&-\frac{2p}{p+2}\int\Big[\phi''+(N-1+\frac{2b}{p})\frac{\phi'}{r}-2N-\frac{4b}{p}\Big]|x|^{-b}|u|^{p+2}dx,\\
\mathcal{R}_3=&-\int\Delta^2\phi|u|^2dx.
\end{align*}
We choose $\phi$ such that
\[
0\leq\phi\leq r^2,  \quad\phi''\leq2, \quad \phi^{(4)}\leq\frac{4}{R^2},
\]
and
\begin{align*}
   \phi=\left\{ \aligned
   r^2&, \ 0\leq r\leq R,\\
    0&,  \ r\ge 2R.
    \endaligned
  \right.
  \end{align*}
According to our needs, we choose the appropriate $R$ later.
 \section{Proof of Theorem \ref{thm:main1} and Theorem \ref{thm:main2}}\label{sec:proof}

In this section, we prove Theorem \ref{thm:main1} in subsection \ref{3.1} and prove Theorem \ref{thm:main2} in subsection \ref{3.2}.

Before this, we give a lemma about $K(u(t))$, see \cite{Ardila-Cardoso-21} for its proof.
\begin{lem}\label{lem3.222}
Let $\frac{4-2b}{N}<p<2^{\ast}_b$, then for all $t\in I$, there exits $\de_0>0$, such that
 \begin{align*}
K(u(t))<-\de_0\|\nabla u(t)\|_{L^2}^2.
\end{align*}
Moreover,
 \begin{align*}
\|\nabla u(t)\|_{L^2}\gtrsim 1.
\end{align*}
\end{lem}

\subsection{Proof of Theorem \ref{thm:main1}}\label{3.1}
Below we will introduce a lemma about $I''(t)$ when $\frac{4-2b}{N}<p\leq \frac4N$, which is similar to the result when $\frac{4}{N}<p< 2^{\ast}_b$. Although there is only one difference in the proof process, this is the essential reason for the different indicators in the two cases. These results play a vital role in proving Theorem \ref{thm:main1} and Theorem \ref{thm:main2}.

\begin{lem}\label{lem2.333}
 Let $\frac{4-2b}{N}<p< \min\{\frac{4}{N}, 2^{\ast}_b\}$ or $p =\frac{4}{N}$ with $0< b< \frac 4 N$, then there exists a constant $\delta_0>0$, such that
\begin{align*}
I''(t)\leq-4\delta_0\|\nabla u\|_{L^2}^2.
\end{align*}
\end{lem}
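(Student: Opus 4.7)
The plan is to plug Lemma \ref{lem3.222} into the decomposition \eqref{2.6} and show that the three localization remainders $\mathcal{R}_1,\mathcal{R}_2,\mathcal{R}_3$ can be absorbed into $\|\nabla u\|_{L^2}^2$ once the cutoff radius $R$ is taken large enough. Since Lemma \ref{lem3.222} already gives $8K(u(t))\le -8\delta_0\|\nabla u\|_{L^2}^2$, it suffices to prove $\mathcal{R}_1+\mathcal{R}_2+\mathcal{R}_3\le 4\delta_0\|\nabla u\|_{L^2}^2$. Throughout, I will freely use that assumptions \eqref{1.4}--\eqref{1.5}, conservation of mass and energy, and the sharp Gagliardo--Nirenberg inequality produce a uniform-in-time lower bound $\|\nabla u(t)\|_{L^2}\ge c_0>0$; this is standard, cf.\ \cite{Farah-16, Ardila-Cardoso-21}.

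The term $\mathcal{R}_1$ is handled by the choice of $\phi$: using $|x\cdot\nabla u|^2=r^2|\partial_r u|^2$, one can rewrite
\begin{align*}
\mathcal{R}_1=4\int\Bigl(\frac{\phi'}{r}-2\Bigr)|\nabla u|^2\,dx+4\int\Bigl(\phi''-\frac{\phi'}{r}\Bigr)|\partial_r u|^2\,dx,
\end{align*}
and both integrands vanish on $\{r\le R\}$ (where $\phi=r^2$) and on $\{r\ge 2R\}$ (where $\phi\equiv 0$). On the transition annulus, $\phi\le r^2$ together with $\phi''\le 2$ yields $\phi'(r)\le 2r$, so $\phi'/r-2\le 0$; splitting by the sign of $\phi''-\phi'/r$ and invoking $|\partial_r u|^2\le |\nabla u|^2$ where that coefficient is nonnegative, the two integrals combine into $4\int(\phi''-2)|\nabla u|^2\,dx\le 0$, and hence $\mathcal{R}_1\le 0$. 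The term $\mathcal{R}_3$ is controlled by $\phi^{(4)}\le 4/R^2$, which produces $|\Delta^2\phi|\le C/R^2$, so $|\mathcal{R}_3|\le CM(u_0)/R^2\le \delta_0\|\nabla u\|_{L^2}^2$ for $R$ large enough, thanks to the lower bound $\|\nabla u\|_{L^2}\ge c_0$.

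The decisive step, and the one that singles out the range $p\le 1+\frac{4}{N}$, is the estimate for $\mathcal{R}_2$. A direct computation shows the bracket $\phi''+(N-1+\frac{2b}{p-1})\frac{\phi'}{r}-2N-\frac{4b}{p-1}$ vanishes identically when $\phi=r^2$, so its integrand is supported on the annulus, where the bracket is uniformly bounded and $|x|^{-b}\le R^{-b}$. The Gagliardo--Nirenberg inequality $\|u\|_{L^{p+1}}^{p+1}\le C\|u\|_{L^2}^{p+1-\alpha}\|\nabla u\|_{L^2}^{\alpha}$ with $\alpha=N(p-1)/2$, combined with mass conservation to swallow the $L^2$ factor, then gives $|\mathcal{R}_2|\le CR^{-b}\|\nabla u\|_{L^2}^{\alpha}$. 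The hypothesis $p\le 1+\frac{4}{N}$ reads exactly $\alpha\le 2$, and so
\begin{align*}
\|\nabla u\|_{L^2}^{\alpha}=\|\nabla u\|_{L^2}^{\alpha-2}\|\nabla u\|_{L^2}^2\le c_0^{\alpha-2}\|\nabla u\|_{L^2}^2,
\end{align*}
whence $|\mathcal{R}_2|\le CR^{-b}\|\nabla u\|_{L^2}^2\le 3\delta_0\|\nabla u\|_{L^2}^2$ once $R$ is sufficiently large. Adding the three bounds closes the argument. The main obstacle is exactly the constraint $\alpha\le 2$: when $\alpha>2$ no fixed $R$ can absorb the Gagliardo--Nirenberg bound, which is precisely why case~2 of Theorem \ref{thm:main1} will instead be treated with a time-dependent radius $R=R(T)$.
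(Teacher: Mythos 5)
Your proof is correct and follows essentially the same route as the paper: the decomposition \eqref{2.6} combined with Lemma \ref{lem3.222}, the sign argument giving $\mathcal{R}_1\le 0$, Gagliardo--Nirenberg together with $|x|^{-b}\le R^{-b}$ for $\mathcal{R}_2$ (with the key observation that $p\le 1+\frac{4}{N}$ makes the gradient exponent at most $2$), the $R^{-2}$ bound for $\mathcal{R}_3$, and absorption for $R$ large using the uniform lower bound on $\|\nabla u(t)\|_{L^2}$, which you state explicitly while the paper uses it only implicitly. One cosmetic correction: on $\{r\ge 2R\}$ the bracket in $\mathcal{R}_2$ equals the nonzero constant $-2N-\frac{4b}{p-1}$ (and the factor $\frac{\phi'}{r}-2$ in $\mathcal{R}_1$ equals $-2$), so the relevant region is all of $\{|x|>R\}$ rather than just the transition annulus; your estimates apply verbatim on that larger set, so the conclusion is unaffected.
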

\begin{proof}
From \eqref{2.6}, we estimate the terms on the right hand side, term by term.
Firstly, notice that  when $0< b< \frac 4 N$, we have $p=\frac{4}{N}<2^{\ast}_b$. Hence, by Lemma \ref{lem3.222}, we get
\begin{align}\label{1.26}
K(u(t))<-\de_0\|\nabla u(t)\|_{L^2}^2.
\end{align}
Secondly, for the term $\mathcal{R}_1$, we claim that
\begin{align}\label{2.7}
\mathcal{R}_1\leq0.
\end{align}
Indeed, we divide the space $\R^N$ into two parts:
\[\{\frac{\phi''}{r^2}-\frac{\phi'}{r^3}\leq0\}\] and
\[ \{\frac{\phi''}{r^2}-\frac{\phi'}{r^3}>0\}.\]
When $\frac{\phi''}{r^2}-\frac{\phi'}{r^3}\leq0$, since $\phi'\leq 2r$, \eqref{2.7} is obvious.
When \[\frac{\phi''}{r^2}-\frac{\phi'}{r^3}>0,\]
then since $\phi''\leq2$, we have
\begin{align*}
\mathcal{R}_1\leq&4\int (\frac{\phi'}{r}-2)|\nabla u|^2dx+4\int(\frac{\phi''}{r^2}-\frac{\phi'}{r^3})|x|^2|\nabla u|^2dx\nonumber\\
=&4\int (\frac{\phi'}{r}-2)|\nabla u|^2dx+4\int(\phi''-\frac{\phi'}{r})\frac{|x|^2}{r^2}|\nabla u|^2dx\nonumber\\
=&4\int(\phi''-2)|\nabla u|^2dx\leq0.
\end{align*}
Next, we treat the term $\mathcal{R}_2$. Recall that
\[
\mathcal{R}_2=-\frac{2p}{p+2}\int\Big[\phi''+(N-1+\frac{2b}{p})\frac{\phi'}{r}-2N-\frac{4b}{p}\Big]|x|^{-b}|u|^{p+2}dx.
\]
If $0\leq r\leq R$, then
\[
\phi'=2r, \quad\phi''=2,
\]
and thus,
\[
\phi''+(N-1+\frac{2b}{p})\frac{\phi'}{r}-2N-\frac{4b}{p}=0.
\]
Hence,
\[
\supp\Big[\phi''+(N-1+\frac{2b}{p})\frac{\phi'}{r}-2N-\frac{4b}{p}\Big]\subset(R,\infty).
\]
By the Gagliardo-Nirenberg inequality we have
\begin{align}\label{1.23}
\mathcal{R}_2&\lesssim\int_{|x|>R}|x|^{-b}|u|^{p+2}dx\lesssim R^{-b}\|\nabla u\|_{L^2}^{2\tilde{\alpha}}\|u\|_{L^2}^{p+2-2\tilde{\alpha}}\nonumber\\
&\lesssim R^{-b}\|\nabla u\|_{L^2}^{2\tilde{\alpha}},
\end{align}
where $\tilde{\alpha}=\frac{Np}{4}\in(0,1]$.

Finally, for the term $\mathcal{R}_3$, we have
\begin{align}\label{2.9}
\mathcal{R}_3\lesssim R^{-2}\|u\|_{L^2}^2\lesssim  R^{-2}.
\end{align}
Thus, from \eqref{2.6}, \eqref{1.26}-\eqref{2.9}, we have
\begin{align*}
I''(t)\lesssim& -8\de_0\|\nabla u(t)\|_{L^2}^2+R^{-b}\|\nabla u(t)\|_{L^2}^{2\tilde{\alpha}}+R^{-2}.
\end{align*}
If $\tilde{\alpha}=1$, then the second term can be absorbed by the first one by fixing $R>0$ large enough, and thus we have
\begin{align*}
I''(t)\lesssim-4\de_0\|\nabla u(t)\|_{L^2}^2.
\end{align*}
If $0<\tilde{\alpha}<1$, by using Young's inequality for the second term, we can further obtain the above estimates.
The lemma is now proved.
\end{proof}

We are now in a position to prove Theorem \ref{thm:main1}.
\begin{proof}
Suppose that the maximal existence interval is $I=(-T_{*},T^*)$. We proceed by contradiction and assume that $T^*=+\infty$.

First, by the Lemma \ref{lem3.222}, there exists $C_0>0$ such that
\begin{align*}
 \|\nabla u(t)\|_{L^2}^2\geq C_0,
 \end{align*}
for all $t\in(-T_{*},T^*) $.
Get directly from the Lemma \ref{lem2.333}, we have
\begin{align*}
I''(t)\leq-4\de_0C_0.
\end{align*}
Further, integrate from $0$ to $t$ for the above formula,
\begin{align}
I'(t)\leq-4\de_0C_0t+I'(0).\label{1244}
\end{align}
We now may choose $T_0>0$ sufficiently large such that $I'(0)<2\de_0C_0T_0$. From this and \eqref{1244},
\begin{align}\label{2.11}
I'(t)\leq-2\de_0C_0t<0,\quad t\geq T_0.
\end{align}
By Lemma \ref{lem2.333} and \eqref{2.11}, we obtain
\begin{align}\label{2.12}
I'(t)=&\int_{T_0}^t I''(s) ds+I'(T_0)\nonumber\\
\leq&-4\de_0\int_{T_0}^t\|\nabla u(s)\|_{L^2}^2 ds,\quad t\geq T_0.
\end{align}
Moreover, from \eqref{2.1} and H\"older's inequality we deduce
\begin{align}\label{2.13}
|I'(t)|=&|2Im\int\phi'\frac{x\cdot \nabla u}{r} \bar{u}dx|\nonumber\\
\lesssim&R\|\nabla u(t)\|_{L^2}\|u(t)\|_{L^2}\nonumber\\
\lesssim&R\|\nabla u(t)\|_{L^2}.
\end{align}
From \eqref{2.11}-\eqref{2.13},  we can obtain
\begin{align*}
\int_{T_0}^t\|\nabla u(s)\|_{L^2}^2 ds\lesssim |I'(t)|\lesssim\|\nabla u(t)\|_{L^2}.
\end{align*}
Define $f(t):=\int_{T_0}^t\|\nabla u(s)\|_{L^2}^2 ds$ and thus we have $f^2(t)\lesssim f'(t)$.
Finally, taking $T'>T_0$ and integrating on $[T',t)$ we get
\[
t-T'\lesssim \int_{T'}^t\frac{f'(s)}{f^2(s)}ds=\frac{1}{f(T')}-\frac{1}{f(t)}\leq\frac{1}{f(T')}.
\]
Noting that ${f(T')}>0$, letting $t\rightarrow \infty$ we arrive to a contradiction. Hence Theorem \ref{thm:main1} is completed.
\end{proof}

\subsection{Proof of Theorem \ref{thm:main2}}\label{3.2}

Next, we give the proof of Theorem \ref{thm:main2}.
Before that, we will also give an important lemma. Here we can see the difference between two cases in the treatment of $\mathcal{R}_2$.

\begin{lem}\label{2.554623}
Let $\frac{4}{N}<p< 2^{\ast}_b$, then there exists $\alpha>1$, such that
\begin{align*}
I''(t)\lesssim -8\de_0 \|\nabla u(t)\|_{L^2}^2+R^{-b}\|\nabla u(t)\|_{L^2}^{2\alpha}+R^{-2}.
\end{align*}
\begin{proof}
The estimates for $K(u(t))$, $\mathcal{R}_1$ and $\mathcal{R}_3$ are the same as in Lemma \ref{lem3.222} and Lemma \ref{lem2.333}, it reduces to estimate $\mathcal{R}_2$.
By the Gagliardo-Nirenberg inequality we have
\begin{align*}
\mathcal{R}_2\lesssim\int_{|x|>R}|x|^{-b}|u|^{p+2}dx\lesssim& R^{-b}\|\nabla u(t)\|_{L^2}^{2\alpha}\|u\|_{L^2}^{p+2-2\alpha}\nonumber\\
\lesssim& R^{-b}\|\nabla u(t)\|_{L^2}^{2\alpha},
\end{align*}
where $\alpha=\frac{Np}{4}\in (1, \frac{2^{\ast}_0 N}{4})$.\\
Combing \eqref{2.6}, \eqref{2.7}, \eqref{2.9}, Lemma \ref{lem3.222} with the last inequality, we get the desired result.
\end{proof}
\end{lem}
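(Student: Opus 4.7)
The strategy is to follow the decomposition \eqref{2.6} used in Lemma \ref{lem2.333} and re-examine each piece, noting what must change when $p>1+\frac{4}{N}$. The plan is to apply Lemma \ref{lem3.222} to $K(u(t))$, reuse the pointwise arguments from Lemma \ref{lem2.333} verbatim on $\mathcal{R}_1$ and $\mathcal{R}_3$, and then carry out a Gagliardo--Nirenberg estimate on $\mathcal{R}_2$ with the new admissible exponent $\alpha$.

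First, Lemma \ref{lem3.222} gives $8K(u(t)) \le -8\delta_0\|\nabla u(t)\|_{L^2}^2$. Second, for $\mathcal{R}_1$ one splits $\R^N$ into $\{\phi''/r^2-\phi'/r^3\le 0\}$ and its complement; on the first region $\phi'\le 2r$ makes the contribution nonpositive, and on the second $|x\cdot\nabla u|^2\le |x|^2|\nabla u|^2$ together with $\phi''\le 2$ gives $4\int(\phi''-2)|\nabla u|^2\,dx\le 0$, exactly as before. Third, $|\Delta^2\phi|\lesssim R^{-2}$ (from $\phi^{(4)}\le 4/R^2$) combined with mass conservation yields $\mathcal{R}_3\lesssim R^{-2}$.

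The new input is the handling of $\mathcal{R}_2$. The choice of $\phi$ makes $\phi'=2r$, $\phi''=2$ on $\{0\le r\le R\}$, so the bracket in $\mathcal{R}_2$ vanishes there and its support lies in $\{|x|>R\}$. On that set $|x|^{-b}\le R^{-b}$, hence
\begin{align*}
\mathcal{R}_2 \;\lesssim\; R^{-b}\int_{\R^N}|u|^{p+1}\,dx.
\end{align*}
The standard (unweighted) Gagliardo--Nirenberg inequality gives $\|u\|_{L^{p+1}}^{p+1}\lesssim \|\nabla u\|_{L^2}^{2\alpha}\|u\|_{L^2}^{p+1-2\alpha}$ with $\alpha=\frac{N(p-1)}{4}$, and mass conservation absorbs the $\|u\|_{L^2}^{p+1-2\alpha}$ factor into the implicit constant. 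The hypothesis $p>1+\frac{4}{N}$ is precisely what forces $\alpha>1$, and assembling the four estimates yields the claimed bound.

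The main point to flag is that this is the very step where case 2 diverges from case 1: when $\tilde\alpha\le 1$ one can combine $R^{-b}\|\nabla u\|^{2\tilde\alpha}$ with the leading $-\|\nabla u\|^2$ term by taking $R$ large, but when $\alpha>1$ the factor $R^{-b}\|\nabla u(t)\|_{L^2}^{2\alpha-2}$ cannot be made uniformly small a priori, since $\|\nabla u(t)\|_{L^2}$ is not known to be bounded. Therefore the lemma must retain $R^{-b}\|\nabla u\|_{L^2}^{2\alpha}$ as an unabsorbed remainder; this is exactly what later forces the authors to choose a time-dependent radius $R=R(T)$ in the subsequent blow-up analysis.
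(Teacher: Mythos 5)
Your proposal is correct and follows essentially the same route as the paper: it reuses the estimates for $K(u(t))$, $\mathcal{R}_1$ and $\mathcal{R}_3$ from Lemmas \ref{lem3.222} and \ref{lem2.333}, and bounds $\mathcal{R}_2$ on $\{|x|>R\}$ by $R^{-b}$ times the Gagliardo--Nirenberg estimate with $\alpha=\frac{N(p-1)}{4}>1$. Your closing remark about the remainder $R^{-b}\|\nabla u\|_{L^2}^{2\alpha}$ not being absorbable and necessitating the time-dependent radius $R(T)$ accurately reflects the paper's subsequent argument.
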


Now, we give the proof of Theorem \ref{thm:main2}.
\begin{proof}
Fixing $T>0$, we let
\begin{align*}
R(T)=(3\de_0)^{-\frac 1 b}\sup_{t\in [0,T]}\|\nabla u(t)\|_{L^2}^{\frac{2\alpha-2}{b}}.
\end{align*}
Then, we have
\begin{align}\label{2.16}
R(T)^{-b}\|\nabla u(t)\|_{L^2}^{2\alpha}\leq 3\de_0\|\nabla u(t)\|_{L^2}^{2-2\alpha}\|\nabla u(t)\|_{L^2}^{2\alpha}=3\de_0\|\nabla u(t)\|_{L^2}^{2}
\end{align}
and
\begin{align}\label{2.17}
R(T)^{-2}\lesssim 3\de_0\|\nabla u(t)\|_{L^2}^{2}.
\end{align}
Combing \eqref{2.16}, \eqref{2.17} and Lemma \ref{2.554623}, we obtain
\begin{align*}
  I''(t)\lesssim-\delta_0\|\nabla u(t)\|_{L^2}^2.
\end{align*}
Applying the classical analysis identity
\begin{align*}
  I(T)=I(0)+I'(0)\cdot T+\int_0^T\int_0^s I''(t)dtds.
\end{align*}
From the definition of $I(t)$ and the choosing of $\phi$, we have
\begin{align*}
 I(T)\geq0,
\end{align*}
and
\begin{align*}
I(0)\lesssim R(T)^2.
\end{align*}
By \eqref{2.13}, we get
\begin{align*}
I'(0)\lesssim R(T)\|\nabla u_0\|_{L^2}\lesssim R(T).
\end{align*}
Collecting the estimates above we have
\begin{align}\label{2.18}
  \delta_0\int_0^T\int_0^s\|\nabla u(t)\|_{L^2}^2 dt ds
&\lesssim R(T)^2+R(T)\cdot T.
\end{align}
Recalling that $\|\nabla u(t)\|_{L^2}^2\geq C_0>0$, for all $t\in [0,T)$, we get
\begin{align}\label{2.19}
\delta_0\int_0^T\int_0^s\|\nabla u(t)\|_{L^2}^2 dt ds\gtrsim T^2.
\end{align}
Further, by \eqref{2.18}, \eqref{2.19} and the elementary inequality we have
\begin{align*}
T^2\lesssim R(T)^2+R(T)\cdot T\leq R(T)^2+\frac{R(T)^2+T^2}{2}.
\end{align*}
From this, we can obtain
\begin{align*}
R(T)^2\gtrsim T^2.
\end{align*}
Noting $R(T)>0$, hence
\begin{align*}
  R(T)\gtrsim T.
\end{align*}
That is
\begin{align*}
  \sup_{t\in[0,T]}\|\nabla u(t)\|_{L^2}\gtrsim T^{\frac{b}{2\alpha-2}}.
\end{align*}
This proves Theorem \ref{thm:main2}.
\end{proof}

In conclusion, we conclude the proofs of Theorem \ref{thm:main1} and Theorem \ref{thm:main2}.

\section*{Acknowledgements}

The authors are very grateful to the anonymous referees for helpful comments and suggestions.

\end{document}